\title{Strengthened Brooks Theorem for digraphs of girth three}
\author{Ararat Harutyunyan\thanks{Research supported by FQRNT
  (Le Fonds qu\'{e}b\'{e}cois de la recherche sur la nature et les technologies)
  doctoral scholarship.}\\
  {Department of Mathematics}\\
  {Simon Fraser University}\\
  {Burnaby, B.C. V5A 1S6} \\
  email: {\tt aha43@sfu.ca}
\and
  Bojan Mohar\thanks{Supported in part by an NSERC Discovery Grant (Canada),
  by the Canada Research Chair program, and by the
  Research Grant P1--0297 of ARRS (Slovenia).}~\thanks{On leave from:
  IMFM \& FMF, Department of Mathematics, University of Ljubljana, Ljubljana,
  Slovenia.}\\
  {Department of Mathematics}\\
  {Simon Fraser University}\\
  {Burnaby, B.C. V5A 1S6} \\
  email: {\tt mohar@sfu.ca}
}
\newtheorem{theorem}{Theorem}[section]
\newtheorem{lemma}[theorem]{Lemma}
\newtheorem{corollary}[theorem]{Corollary}
\newtheorem{proposition}[theorem]{Proposition}
\newtheorem{conjecture}[theorem]{Conjecture}
\newtheorem{question}[theorem]{Question}
\newcommand{\DEF}[1]{{\em #1\/}}
\newcommand{\C}{{\cal C}}
\newcommand{\PP}{\mathbb P}
\newcommand{\EE}{\mathbb E}
\newcommand{\tDelta}{\tilde{\Delta}}
\begin{document}

\maketitle

\begin{abstract}
Brooks' Theorem states that a connected graph $G$ of maximum
degree $\Delta$ has chromatic number at most $\Delta$, unless $G$
is an odd cycle or a complete graph. A result of Johansson
\cite{J1996} shows that if $G$ is triangle-free, then the
chromatic number drops to $O(\Delta / \log \Delta)$. In this
paper, we derive a weak analog for the chromatic number of
digraphs. We show that every (loopless) digraph $D$ without
directed cycles of length two has chromatic number $\chi(D) \leq
(1-e^{-13}) \tilde{\Delta}$, where $\tDelta$ is the maximum
geometric mean of the out-degree and in-degree of a vertex in $D$,
when $\tDelta$ is sufficiently large. As a corollary it is proved
that there exists an absolute constant $\alpha < 1$ such that
$\chi(D) \leq \alpha (\tDelta + 1)$ for every $\tDelta > 2$.
\end{abstract}

{\bf Keywords:} Digraph coloring, dichromatic number,
Brooks theorem, digon, sparse digraph.

\section{Introduction}

Brooks' Theorem states that if $G$ is a connected graph with
maximum degree $\Delta$, then $\chi(G) \leq \Delta + 1$, where
equality is attained only for odd cycles and complete graphs. The
presence of triangles has significant influence on the chromatic
number of a graph. A result of Johansson \cite{J1996} states that
if $G$ is triangle-free, then $\chi(G) = O \left(\Delta / \log
\Delta \right)$. In this note, we study the chromatic number of
digraphs \cite{BFJKM2004}, \cite{M2003}, \cite{N1982} and show
that Brooks' Theorem for digraphs can also be improved when we
forbid directed cycles of length 2.

\subsection*{Digraph colorings and the Brooks Theorem}

Let $D$ be a (loopless) digraph. A vertex set $A \subset V(D)$ is
called \DEF{acyclic} if the induced subdigraph $D[A]$ has no
directed cycles. A \DEF{$k$-coloring} of $D$ is a partition of
$V(D)$ into $k$ acyclic sets. The minimum integer $k$ for which
there exists a $k$-coloring of $D$ is the \DEF{chromatic number}
$\chi(D)$ of the digraph D. The above definition of the chromatic
number of a digraph was first introduced by Neumann-Lara
\cite{N1982}. The same notion was independently introduced much
later by the second author when considering the circular chromatic
number of weighted (directed or undirected) graphs \cite{M2003}.
The chromatic number of digraphs was further investigated by Bokal
et al.~\cite{BFJKM2004}. The notion of chromatic number of a
digraph shares many properties with the notion of the chromatic
number of undirected graphs. Note that if $G$ is an undirected
graph, and $D$ is the digraph obtained from $G$ by replacing each
edge with the pair of oppositely directed arcs joining the same
pair of vertices, then $\chi(D) = \chi(G)$ since any two adjacent
vertices in $D$ induce a directed cycle of length two. Another
useful observation is that a $k$-coloring of a graph $G$ is a
$k$-coloring of a digraph $D$, where $D$ is a digraph obtained
from assigning arbitrary orientations to the edges of $G$. Mohar
\cite{M2010} provides some further evidence for the close
relationship between the chromatic number of a digraph and the
usual chromatic number. For digraphs, a version of Brooks' theorem
was proved in \cite{M2010}. Note that a digraph $D$ is
\DEF{$k$-critical} if $\chi(D) = k$, and $\chi(H) < k$ for every
proper subdigraph $H$ of $D$.

\begin{theorem}[\cite{M2010}]
\label{th:0}
Suppose that $D$ is a $k$-critical digraph in which for every
vertex $v \in V(D)$, $d^{+}(v) = d^{-}(v) = k-1$. Then one of the
following cases occurs:
\begin{enumerate}
\item $k=2$ and $D$ is a directed cycle of length $n \geq 2.$
\item $k=3$ and $D$ is a bidirected cycle of odd length $n \geq 3$.
\item $D$ is bidirected complete graph of order $k \geq 4$.
\end{enumerate}
\end{theorem}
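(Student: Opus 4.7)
My plan is to adapt the classical proof of Brooks' theorem to the digraph setting, handling the small cases $k = 2, 3$ directly and reducing the main case $k \geq 4$ to a size bound via a Kempe-style recoloring argument.

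For $k = 2$, the hypothesis $d^+(v) = d^-(v) = 1$ makes $D$ a disjoint union of directed cycles, and $k$-criticality immediately forces $D$ to be a single such cycle, giving case~(1). For $k = 3$, I would first argue that every arc of $D$ must lie in a digon---an asymmetric arc would let one construct an acyclic $2$-partition, contradicting $\chi(D) = 3$. Once $D$ is symmetric and $2$-regular in both orientations, the underlying undirected graph $G$ is $2$-regular with $\chi(G) = 3$, and $k$-criticality pins $G$ down to a single odd cycle, giving case~(2).

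For the main case $k \geq 4$, the essential reduction is to prove $|V(D)| = k$. Once this is established, the hypothesis $d^+(v) = k - 1 = |V(D)| - 1$ forces $v$ to send an arc to every other vertex, and dually to receive one from every other vertex, so every pair of vertices is joined by a digon and $D$ is the bidirected complete graph $K_k$, giving case~(3).

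To prove $|V(D)| = k$, I would suppose for contradiction $|V(D)| > k$ and fix a vertex $v$. By $k$-criticality, $\chi(D - v) = k - 1$, so $D - v$ admits an acyclic partition $C_1, \dots, C_{k-1}$. Since $v$ cannot be acyclically added to any $C_i$, each class must contain a directed path from some out-neighbor $u_i$ of $v$ to some in-neighbor $w_i$ of $v$; because $|N^+(v)| = |N^-(v)| = k - 1$, pigeonhole forces each class to contain exactly one out-neighbor and exactly one in-neighbor of $v$. The plan is then to perform a Kempe-style swap between two classes $C_i, C_j$ along a suitable connected component of $D[C_i \cup C_j]$, chosen so as to disrupt the blocking $u_i$-to-$w_i$ path and free one class to accept $v$. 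The \emph{main obstacle} is that in the digraph setting a Kempe swap may introduce new directed cycles within a recolored class; the crux of the argument is therefore to show, using the rigid constraint $d^+(u) = d^-(u) = k - 1$ at every $u$ together with the existence of a vertex outside $N^+(v) \cup N^-(v)$ guaranteed when $|V(D)| > k$, that an admissible swap always exists. Any such swap yields a $(k-1)$-coloring of $D$, contradicting $\chi(D) = k$.
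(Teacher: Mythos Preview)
The paper does not contain a proof of this theorem: Theorem~\ref{th:0} is quoted as a result of \cite{M2010} and is used only as background. There is therefore nothing in the present paper to compare your proposal against.

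On the proposal itself, two points deserve attention. For $k=3$, the assertion that an asymmetric arc $xy$ immediately yields an acyclic $2$-partition is not justified. Criticality gives a $2$-coloring of $D-xy$; in any such coloring $x$ and $y$ must lie in the same class $C_1$, and $D[C_1]$ must contain a directed $y\to x$ path, but this alone does not produce a contradiction. You still have to exploit the full $2$-in/$2$-out regularity to rule out such configurations, and that is exactly the nontrivial part of the case.

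For $k\ge 4$, you correctly identify the target $|V(D)|=k$ and the natural Kempe-style setup, but you then explicitly label the decisive step as ``the main obstacle'' and do not resolve it. In the digraph setting, swapping colors on a weakly connected component of $D[C_i\cup C_j]$ can create new monochromatic directed cycles, so the undirected Kempe argument does not transfer verbatim; one needs a structural statement (in the spirit of Gallai trees, as in Theorem~\ref{th:4}) to control what happens. As written, the proposal is a plan rather than a proof: the crux in both nontrivial cases is asserted rather than established.
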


A tight upper bound on the chromatic number of a digraph was first
given by Neumann-Lara \cite{N1982}.

\begin{theorem}[\cite{N1982}]
\label{th:01}
Let $D$ be a digraph and denote by $\Delta_o$ and $\Delta_i$ the
maximum out-degree and in-degree of $D$, respectively. Then
$$\chi(D) \leq \min \{\Delta_o,\Delta_i\} + 1.$$
\end{theorem}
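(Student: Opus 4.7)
The plan is to prove this bound by a greedy coloring argument, which is a direct digraph analogue of the elementary $\chi(G) \leq \Delta(G)+1$ bound for graphs. As a first reduction, I would observe that reversing all arcs of $D$ preserves the acyclicity of every induced subdigraph, so the chromatic number of $D$ equals that of its reverse; since reversing also interchanges in- and out-degrees, it suffices to prove $\chi(D) \leq \Delta_o + 1$, and the full statement then follows by applying this bound to both $D$ and its reverse and taking the minimum.

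Set $k = \Delta_o + 1$, fix any linear order $v_1, \ldots, v_n$ of $V(D)$, and color the vertices greedily in this order. When it is the turn of $v_i$, the set $N^+(v_i) \cap \{v_1, \ldots, v_{i-1}\}$ of already-colored out-neighbors has size at most $d^+(v_i) \leq \Delta_o = k-1$, so at least one color in $\{1, \ldots, k\}$ is unused on this set; assign such a color to $v_i$.

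It then remains to check that every color class $V_c$ is acyclic. Suppose for contradiction that $D[V_c]$ contains a directed cycle $C$, and let $v_j$ be the vertex of $C$ with the largest index in the ordering. If $v_\ell$ denotes the successor of $v_j$ along $C$, then $\ell < j$ and $v_\ell \in N^+(v_j) \cap V_c$, so at the moment $v_j$ was colored the greedy rule forced its color to differ from $c(v_\ell) = c$, contradicting $v_j \in V_c$. Hence every $V_c$ is acyclic and $\chi(D) \leq k$, as required.

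I do not anticipate any serious obstacle; the one conceptual point worth flagging, which also explains why the bound involves the minimum of $\Delta_o$ and $\Delta_i$ rather than something like $\Delta_o + \Delta_i$, is that a directed cycle must traverse an out-arc at every vertex, so tracking only already-colored out-neighbors is both necessary and sufficient to block monochromatic directed cycles.
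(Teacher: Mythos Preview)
Your greedy argument is correct and is the standard proof of this bound. Note, however, that the paper does not actually give its own proof of this theorem: it is stated as a background result attributed to Neumann-Lara \cite{N1982}, with no argument supplied. So there is nothing in the paper to compare against; your proof simply fills in the (well-known) details that the authors omitted.
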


In this note, we study improvements of this result using the
following substitute for the maximum degree. If $D$ is a digraph,
we let
$$
   \tDelta = \tDelta(D) =
   \max \{\sqrt{d^{+}(v)d^{-}(v)} \mid v \in V(D)\}
$$
be the maximum geometric mean of the in-degree and
out-degree of the vertices. Observe that $\tDelta \leq
\frac{1}{2}(\Delta_o + \Delta_i)$, by the arithmetic-geometric
mean inequality (where $\Delta_o$ and $\Delta_i$ are as in Theorem
\ref{th:01}). We show that when $\tDelta$ is large (roughly
$\tDelta \geq 10^{10}$), then every digraph $D$ without digons has
$\chi(D) \leq \alpha \tDelta$, for some absolute constant $\alpha
< 1$. We do not make an attempt to optimize $\alpha$, but show
that $\alpha = 1 - e^{-13}$ suffices. To improve the value of
$\alpha$ significantly, a new approach may be required.

It may be true that the following analog of Johansson's result
holds for digon-free digraphs, as conjectured by McDiarmid and
Mohar \cite{MM2002}.

\begin{conjecture} \label{conj:1}
Every digraph $D$ without digons has $\chi(D) =
O(\frac{\tDelta}{\log \tDelta})$.
\end{conjecture}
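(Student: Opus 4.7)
The plan is to adapt Johansson's semi-random (``nibble'') argument from triangle-free graphs to digon-free digraphs, aiming for a coloring with $k = C\tDelta/\log\tDelta$ colors for a sufficiently large absolute constant $C$. For each vertex $v$, I would maintain a list $L(v)$ consisting of the colors $c$ such that assigning $v$ to the current color class $V_c$ leaves $V_c \cup \{v\}$ acyclic, and then run $\Theta(\log\tDelta)$ rounds of a random partial coloring. The goal is to shrink $|L(v)|$ by a factor of roughly $1 - 1/k$ per round while keeping the number of uncolored in- and out-neighbors of $v$ that still have a given color $c$ in their list bounded by $O(|L(v)|/k)$. If both invariants survive all rounds, a straightforward greedy argument on a final sparse residual finishes the coloring.

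In a single round I would independently assign each uncolored vertex a tentative color drawn uniformly from its list, then un-assign any vertex whose tentative choice would create a directed cycle with already-colored vertices or with other simultaneously-colored vertices. The digon-free hypothesis enters exactly where triangle-freeness does in Johansson's proof: removal of a color $c$ from $L(v)$ requires a monochromatic directed path through $v$, and because $D$ has no digons the in- and out-neighbors involved in such paths are genuinely distinct, which decouples the shrinkages of the in- and out-list degrees in expectation and yields the desired entropy gain per round. I would then combine Talagrand's inequality, to concentrate $|L(v)|$ and the list-degrees around their means, with the Lov\'asz Local Lemma applied to the bad events ``$|L(v)|$ drops too fast'' and ``some color $c$ has too many list-neighbors of $v$'' to certify that all vertices achieve the expected behavior simultaneously.

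The main obstacle is that, unlike independence in undirected graphs, acyclicity of a color class is not a local condition: a single color $c$ can be removed from $L(v)$ by a monochromatic directed path whose endpoints are arbitrarily far from $v$, so the natural bad events depend on unboundedly many random choices and the standard Talagrand and Local Lemma setup does not directly apply. To get around this I would try to prove an \emph{effective locality} statement: with very high probability after the first few rounds, each color class has bounded directed diameter $O(\log\tDelta)$, so that only short monochromatic paths can influence $L(v)$ and the dependency neighborhoods shrink to a size for which the Local Lemma is usable. Making this estimate quantitative while simultaneously preserving the per-round entropy gain is where I expect the real difficulty to lie, and it is the reason the statement remains a conjecture rather than a theorem.
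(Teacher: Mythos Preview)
The statement you are attempting to prove is Conjecture~\ref{conj:1} in the paper, attributed to McDiarmid and Mohar, and the paper does \emph{not} prove it; it is explicitly left open. What the paper actually establishes is the far weaker Theorem~\ref{th:1}, namely $\chi(D)\le(1-e^{-13})\tDelta$ for large $\tDelta$, via a single round of random partial coloring followed by greedy completion (Lemma~\ref{L:1}) and the Local Lemma. There is therefore no proof in the paper for you to be compared against.

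As for the content of your proposal: it is a sensible high-level outline of how one would try to transplant Johansson's nibble argument, and you have correctly isolated the genuine obstruction. In the undirected triangle-free case, whether a color $c$ remains in $L(v)$ is determined entirely by the colors assigned to $N(v)$, so all bad events have bounded dependency and Talagrand together with the Local Lemma go through. For acyclic colorings, $c$ must leave $L(v)$ whenever there is a monochromatic directed path from an out-neighbor of $v$ to an in-neighbor of $v$, and such a path may pass through vertices at arbitrary distance from $v$; the dependency graph of the bad events is then unbounded. Your proposed fix, an ``effective locality'' statement asserting that monochromatic classes have directed diameter $O(\log\tDelta)$ with sufficiently high probability, is precisely the missing ingredient, and you do not supply an argument for it. No such argument is currently known, which is exactly why the statement is recorded as a conjecture. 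In short, your write-up is a plausible research plan with the decisive step left open, not a proof.
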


If true, this result would be asymptotically best possible in view
of the chromatic number of random tournaments of order $n$, whose
chromatic number is $\Omega(\frac{n}{\log n})$ and
$\tDelta > \left( \frac{1}{2} - o(1) \right)n$, as shown by
Erd\H{o}s et al.~\cite{EGK1991}.

We also believe that the following conjecture of Reed generalizes
to digraphs without digons.

\begin{conjecture}[\cite{R1998}] \label{conj:2}
Let $\Delta$ be the maximum degree of (an undirected) graph $G$,
and let $\omega$ be the size of the largest clique. Then
$$\chi(G) \leq \left \lceil \frac{\Delta + 1 + \omega}{2} \right \rceil.$$
\end{conjecture}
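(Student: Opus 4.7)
The statement is Reed's famously open conjecture, so any plan must be understood as a research programme rather than a routine derivation. My approach would be to push the probabilistic framework that Reed himself introduced in \cite{R1998}, and that has been refined by King--Reed and Molloy--Reed, to its extreme.

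Step 1 (Reduction). Let $G$ be a vertex-critical counterexample of minimum order with $\chi(G) = k > \lceil (\Delta+1+\omega)/2 \rceil$. Since cliques and odd cycles satisfy the conjecture, Brooks' theorem gives $k \leq \Delta$, and combining with the assumed inequality yields $\omega \leq \Delta - 3$; in particular the gap $\Delta - \omega$ that must be ``saved'' during the coloring is at least $3$. The central dichotomy for vertices of $G$ is between \emph{sparse} vertices, whose neighborhood $N(v)$ contains $\Omega(\Delta)$ non-edges, and \emph{dense} vertices, whose neighborhood is close to a single $\omega$-clique.

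Step 2 (Saving colors at sparse vertices). Color each vertex uniformly at random from $\{1,\ldots,k-1\}$. Using the Lov\'asz Local Lemma together with Talagrand's inequality, show that with positive probability every sparse vertex $v$ has at least two neighbors receiving the same color, thereby creating a reserve color at $v$ for the completion phase.

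Step 3 (Dense vertices and finishing). When $N(v)$ is essentially an $\omega$-clique together with a few extra vertices, the number of distinct colors consumed by $N(v)$ cannot exceed $\omega$ plus the excess, so many free colors remain automatically. Combining the savings from Steps~2 and~3 should reduce the problem to a list-coloring instance in which every uncolored vertex has a list of size exceeding its residual degree, which one finishes by a Haxell-type partial-transversal argument.

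The hard part is Step~2 at the critical constant $\varepsilon = 1/2$. Known probabilistic arguments yield only a very small $\varepsilon$ in the weaker bound $\chi \leq (1-\varepsilon)(\Delta+1) + \varepsilon(\omega+1)$, because the concentration of ``the number of repeated colors inside $N(v)$'' is weak precisely when $N(v)$ is only slightly sparser than a clique. Closing the gap from a tiny $\varepsilon$ to $1/2$ is exactly where this conjecture has resisted attack for over two decades, and I expect that a genuinely new idea, perhaps entropy compression in the style of Moser--Tardos or a structural decomposition that partitions $G$ into clique-like pieces to be colored inductively, will be needed rather than any refinement of Talagrand's inequality alone.
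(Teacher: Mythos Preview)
The paper does not prove this statement at all: it is quoted as Reed's open conjecture from \cite{R1998} purely as motivation, before proposing the digraph analog (Conjecture~\ref{conj:3}). There is therefore no proof in the paper to compare your proposal against.

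Your write-up is honest about this --- you explicitly call it a research programme and identify Step~2 at $\varepsilon = 1/2$ as the genuine obstruction --- so in that sense it is accurate. But as a \emph{proof proposal} it is not one: Steps~2 and~3 are exactly the framework that yields the known partial result $\chi \leq (1-\varepsilon)(\Delta+1) + \varepsilon\omega$ for tiny $\varepsilon$, and you correctly note that nothing in the outline closes the gap to $\varepsilon = 1/2$. The suggestions at the end (entropy compression, structural decomposition) are speculative directions rather than an argument. So the proposal should be read as a survey of the state of the art plus an acknowledgment that the conjecture is open, which matches the paper's treatment; neither you nor the paper claims a proof.
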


If we define $\omega = 1$ for digraphs without digons, we can pose
the following conjecture for digraphs.

\begin{conjecture} \label{conj:3}
Let $D$ be a $\Delta$-regular digraph without digons. Then
$$\chi(D) \leq \left \lceil \frac{\Delta}{2} \right \rceil + 1.$$
\end{conjecture}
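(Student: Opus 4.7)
The plan is to attack Conjecture~\ref{conj:3} by adapting the Molloy--Reed approach to Reed's conjecture to the setting of acyclic partitions of digraphs. Fix $k = \lceil \Delta/2 \rceil + 1$ and aim to produce a $k$-coloring via a randomized construction followed by local repair using the Lov\'asz Local Lemma. The structural input I would exploit is that since $D$ is digon-free, for every arc $(u,v)$ the reverse arc $(v,u)$ is absent, so the in- and out-neighborhoods of every vertex are disjoint and the underlying simple graph is $2\Delta$-regular; this is precisely the feature that should let us undercut the $\tDelta + 1$ barrier of Theorem~\ref{th:01} by a factor of two.

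The first step is to dispose of a base range by a direct structural argument. For $\Delta = 2$, Theorem~\ref{th:0} already delivers the bound, since any $3$-critical $2$-regular digon-free digraph would have to appear in one of the three listed cases, and the only $2$-regular candidates (bidirected odd cycles and bidirected $K_3$) contain digons. I would push this base case as far as possible by hand before invoking probabilistic tools, and also verify the natural extremal candidates (blow-ups of tournaments, random-like constructions) to gain confidence that the claimed constant is correct.

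The main step is the probabilistic argument. Assign each vertex a color from $[k]$ uniformly and independently; for a fixed directed cycle of length $\ell$ the probability of monochromaticity in a given color class is $k^{1-\ell}$, while the number of directed $\ell$-cycles through any vertex is at most $\Delta^{\ell-1}$. The expected number of short monochromatic cycles incident to a vertex then behaves like $\sum_{\ell \geq 2}(\Delta/k)^{\ell-1}$, and $\Delta/k \approx 2$ puts us exactly at the Reed threshold. Define a bad event for each directed cycle and apply a symmetric Lov\'asz Local Lemma, noting that events on vertex-disjoint cycles are independent. A repair phase then handles the residual cycles by recoloring the involved vertices, using the digon-free assumption to show that each such vertex has many colors whose induced in- and out-class neighborhoods are both small enough to permit a compatible recoloring.

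The main obstacle, and presumably the reason the statement appears as a conjecture rather than a theorem, is that the constant $\tfrac{1}{2}$ is exactly the barrier that stops progress on Reed's conjecture in the undirected case. A proof at this precise constant will likely need either an entropy-compression argument tailored to acyclic (rather than independent) colour classes, or a structural theorem characterizing digon-free $\Delta$-regular digraphs that are close to extremal, analogous to Molloy--Reed's analysis of dense neighborhoods. Without such a new ingredient, I would expect a first serious attempt to yield an asymptotic version $\chi(D) \leq (\tfrac{1}{2}+\varepsilon)\Delta$ for all sufficiently large $\Delta$, leaving the sharp ceiling form $\lceil \Delta/2 \rceil + 1$ and the small-$\Delta$ regime as the genuinely hard residue.
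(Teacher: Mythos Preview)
The statement is a \emph{conjecture}, and the paper does not prove it. All the paper records is that the case $\Delta=1$ is trivial and that the cases $\Delta=2,3$ follow from Lemma~\ref{L:4}; the general case is left open. Your proposal is not a proof either, and you are candid about this: the randomized-coloring-plus-Local-Lemma scheme you outline is essentially the machinery behind Theorem~\ref{th:1}, and there it only yields $(1-e^{-13})\tDelta$, nowhere near $\lceil\Delta/2\rceil+1$. Your diagnosis that the constant $\tfrac12$ sits exactly at the Reed barrier, and that a new structural or entropy-compression ingredient would be required, is consistent with the paper's decision to state this as a conjecture rather than a theorem.

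Two minor points on the base cases. First, the paper handles $\Delta=2,3$ via Lemma~\ref{L:4} (which rests on the Gallai-type classification in Theorem~\ref{th:4}) rather than via Theorem~\ref{th:0}: for a $\Delta$-regular digon-free digraph Lemma~\ref{L:4} gives $\chi(D)\le\Delta$, and $\Delta=\lceil\Delta/2\rceil+1$ precisely when $\Delta\in\{2,3\}$. Second, your route through Theorem~\ref{th:0} for $\Delta=2$ is valid but needs one extra sentence: a $3$-critical subdigraph $H$ of a $2$-regular $D$ has $\min(d_H^+(v),d_H^-(v))\ge 2$ by criticality and $\max(d_H^+(v),d_H^-(v))\le 2$ by containment, so $H$ is itself $2$-regular and Theorem~\ref{th:0} applies. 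That argument, however, does not extend to $\Delta=3$, since Theorem~\ref{th:0} only classifies $k$-critical digraphs that are $(k-1)$-regular, whereas for $\Delta=3$ you need to rule out $4$-critical $3$-regular digon-free digraphs.
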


Conjecture \ref{conj:3} is trivial for $\Delta = 1$, and follows
from Lemma \ref{L:4} for $\Delta = 2, 3$. We believe that the
conjecture is also true for non-regular digraphs with $\Delta$
replaced by $\tDelta$.

\subsection*{Basic definitions and notation}

We end this section by introducing some terminology that we will
be using throughout the paper. The notation is standard and we
refer the reader to \cite{BG2001} for an extensive treatment of
digraphs. All digraphs in this paper are \DEF{simple}, i.e. there
are no loops or multiple arcs in the same direction. We use $xy$
to denote the arc joining vertices $x$ and $y$, where $x$ is the
\DEF{initial vertex} and $y$ is the \DEF{terminal vertex} of the
arc $xy$. We denote by $A(D)$ the set of arcs of the digraph $D$.
For $v \in V(D)$ and $e \in A(D)$, we denote by $D-v$ and $D-e$
the subdigraph of $D$ obtained by deleting $v$ and the subdigraph
obtained by removing $e$, respectively. We let $d^{+}_D (v)$ and
$d^{-}_D(v)$ denote the \DEF{out-degree} (the number of arcs whose
initial vertex is $v$) and the \DEF{in-degree} (the number of arcs
whose terminal vertex is $v$) of $v$ in $D$, respectively. The
subscript $D$ may be omitted if it is clear from the context. A
vertex $v$ is said to be \DEF{Eulerian} if $d^{+}(v) = d^{-}(v)$.
The digraph $D$ is \DEF{Eulerian} if every vertex in $D$ is
Eulerian. A digraph $D$ is \DEF{$\Delta$-regular} if $d^{+}(v) =
d^{-}(v) = \Delta$ for all $v \in V(D)$. We say that $u$ is an
\DEF{out-neighbor} (\DEF{in-neighbor}) of $v$ if $vu$ ($uv$) is an
arc. We denote by $N^{+}(v)$ and $N^{-}(v)$ the set of
out-neighbors and in-neighbors of $v$, respectively. The
\DEF{neighborhood} of $v$, denoted by $N(v)$, is defined as $N(v)=
N^{+}(v) \cup N^{-}(v)$. Every undirected graph $G$ determines a
\DEF{bidirected} digraph $D(G)$ that is obtained from $G$ by
replacing each edge with two oppositely directed edges joining the
same pair of vertices. If $D$ is a digraph, we let $G(D)$ be the
\DEF{underlying undirected graph} obtained from $D$ by
``forgetting" all orientations. A digraph $D$ is said to be
\DEF{(weakly) connected} if $G(D)$ is connected. The \DEF{blocks}
of a digraph $D$ are the maximal subdigraphs $D'$ of $D$ whose
underlying undirected graph $G(D')$ is 2-connected. A \DEF{cycle}
in a digraph $D$ is a cycle in $G(D)$ that does not use parallel
edges. A \DEF{directed cycle} in $D$ is a subdigraph forming a
directed closed walk in $D$ whose vertices are all distinct. A
directed cycle consisting of exactly two vertices is called a
\DEF{digon}.

The rest of the paper is organized as follows. In Section 2, we
improve Brooks' bound for digraphs that have sufficiently large
degrees. In Section 3, we consider the problem for arbitrary
degrees.

\section{Strengthening Brooks' Theorem for large $\tDelta$}

The main result in this section is the following theorem.

\begin{theorem} \label{th:1}
There is an absolute constant $\Delta_1$ such that every
digon-free digraph $D$ with $\tDelta = \tDelta(D) \geq \Delta_1$
has $\chi(D) \leq \left(1- e^{-13} \right) \tDelta$.
\end{theorem}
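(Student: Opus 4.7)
Set $k = \lceil(1-e^{-13})\tDelta\rceil$ and aim to partition $V(D)$ into $k$ acyclic sets via the probabilistic method. The key structural reduction is: a set $S \subseteq V(D)$ is acyclic provided that, for every $v \in S$, either $N^-(v) \cap S = \emptyset$ or $N^+(v) \cap S = \emptyset$, because any directed cycle in $D[S]$ must pass through a vertex possessing both an in- and an out-neighbour in $S$. So it suffices to choose $\phi: V(D) \to [k]$ independently and uniformly at random and to avoid, for every $v$, the bad event
\[
 A_v = \bigl\{\exists\, u \in N^-(v),\ w \in N^+(v):\ \phi(u) = \phi(w) = \phi(v)\bigr\}.
\]

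To estimate $P(A_v)$, the crucial input is that $D$ is digon-free, so $N^+(v)\cap N^-(v)=\emptyset$; conditional on $\phi(v)$, the two events ``some in-neighbour matches $\phi(v)$'' and ``some out-neighbour matches $\phi(v)$'' are therefore independent, giving
\[
  P(A_v) = \bigl(1-(1-1/k)^{d^-(v)}\bigr)\bigl(1-(1-1/k)^{d^+(v)}\bigr).
\]
Since $\sqrt{d^+(v)\,d^-(v)} \le \tDelta$, and since at fixed product $ab$ the function $(a,b)\mapsto(1-e^{-a})(1-e^{-b})$ is maximized at $a=b$, the worst case is an Eulerian vertex and one obtains $P(A_v) \le (1-e^{-1/(1-e^{-13})})^2 + o(1)$ for $\tDelta$ large, a positive constant strictly less than $1$.

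The main obstacle is that this bound is a \emph{constant} of order $0.4$, whereas the dependency degree of the family $\{A_v\}$ for the symmetric Lovász Local Lemma is of order $\tDelta^2$: $A_v$ depends on $\phi$ restricted to $\{v\}\cup N^+(v)\cup N^-(v)$, and two such events are probabilistically independent once these distance-$2$ neighbourhoods are disjoint. A direct LLL therefore misses by a factor of $\tDelta^2$. To bridge the gap I would run an iterative naïve-colouring procedure in the spirit of Molloy--Reed: randomly colour, uncolour every $v$ that witnessed its own $A_v$, and recurse on the uncoloured subdigraph, whose expected in- and out-degrees are shrunk by the factor $P(A_v)$. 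Talagrand's inequality controls the leftover degrees, and Neumann-Lara's bound (Theorem~\ref{th:01}) disposes of the final sparse remainder.

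The principal technical work will lie in (i) coupling the per-round uncolouring rate with the rate at which the leftover $\tDelta$ shrinks, so that the total number of colours used across all rounds sums to at most $(1-e^{-13})\tDelta$; (ii) obtaining concentration for both $d^+$ and $d^-$ of each vertex in the leftover subdigraph simultaneously; and (iii) absorbing all additive and multiplicative error terms into the hypothesis $\tDelta \ge \Delta_1$. The explicit numerical constant $e^{-13}$ should then emerge from the optimization balancing these three ingredients.
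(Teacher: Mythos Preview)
Your reduction to the events $A_v$ is a valid sufficient condition for acyclicity, and your bound $P(A_v)\approx(1-e^{-1/c})^2$ when $k=c\tDelta$ is correct (including the observation that the maximum over $d^+d^-\le\tDelta^2$ occurs at the Eulerian point). The genuine gap is the iterative scheme with fresh palettes. If round $i$ uses $k_i=c\,\tDelta_i$ new colours and the leftover degrees contract by the factor $p(c)=(1-e^{-1/c})^2$, then the total palette is
\[
\sum_i c\,\tDelta_i \;=\; \frac{c}{1-p(c)}\,\tDelta.
\]
Minimising $c/(1-p(c))$ over $c>0$ gives about $1.63$ (near $c\approx 0.83$), so the scheme can never use fewer than roughly $1.6\,\tDelta$ colours, let alone $(1-e^{-13})\tDelta$; terminating early with Theorem~\ref{th:01} does not rescue the arithmetic. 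The underlying reason is that avoiding all $A_v$ forces every colour class to be a depth-one DAG (each vertex a source or a sink), which is much stronger than mere acyclicity, and you pay for that overhead in every round.

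What the paper does---and what the Molloy--Reed ``naive colouring'' technique actually is---is not an iteration with fresh colours. It is a \emph{single} random partial colouring with $C=\lfloor\tDelta/2\rfloor$ colours, followed by uncolouring vertices on monochromatic directed $2$-paths. One then shows, via the Local Lemma together with Azuma/Talagrand concentration, that with positive probability every vertex $v$ has at least $\tfrac12 e^{-11}\tDelta$ colours appearing \emph{at least twice} among its retained out-neighbours. Lemma~\ref{L:1} then finishes the colouring greedily with the \emph{same} palette, because each repeated colour in $N^+(v)$ frees one colour for $v$; the saving $e^{-13}$ comes from this repeat count. You are also missing the preliminary reduction to the near-regular range $c_1\tDelta<d^{\pm}(v)<c_2\tDelta$: without it the LLL dependency degree and the concentration inputs are governed by $d^+(v)+d^-(v)$, which is not bounded in terms of $\tDelta$ alone.
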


The rest of this section is the proof of Theorem \ref{th:1}. The
proof is a modification of an argument found in Molloy and Reed
\cite{MR2002} for usual coloring of undirected graphs. We first
note the following simple lemma.

\begin{lemma} \label{L:1}
Let $D$ be a digraph with maximum out-degree $\Delta_o$, and
suppose we have a partial proper coloring of $D$ with at most
$\Delta_o + 1 - r$ colors. Suppose that for every vertex $v$ there
are at least $r$ colors that appear on vertices in $N^{+}(v)$ at
least twice. Then $D$ is $\Delta_o + 1 - r$-colorable.
\end{lemma}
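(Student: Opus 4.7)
The plan is to extend the given partial coloring greedily, coloring the uncolored vertices one at a time while always using a color from the fixed palette of $\Delta_o+1-r$ colors already present. Thus no new color is ever introduced, and it suffices to show that at every step an admissible choice exists.

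\textbf{Counting step.} Consider an uncolored vertex $v$ at some intermediate stage, and let $C_v$ denote the set of distinct colors on the currently colored out-neighbors of $v$, with $R_v\subseteq C_v$ the set of those colors which appear at least twice on $N^+(v)$. Each color in $R_v$ is witnessed by at least two colored out-neighbors and each color in $C_v\setminus R_v$ by at least one, so
\[
   |C_v|+|R_v|\;\le\;d^+(v)\;\le\;\Delta_o.
\]
Assuming $|R_v|\ge r$, this yields $|C_v|\le \Delta_o-r$, and hence at least one color $c\in\{1,\dots,\Delta_o+1-r\}$ is absent from $C_v$. We assign such a $c$ to $v$. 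This preserves acyclicity of the color classes: since no out-neighbor of $v$ lies in class $c$, any directed cycle in the enlarged class $c$ passing through $v$ would have to use an out-arc of $v$ inside the class, which is impossible, while the old class $c$ was acyclic by hypothesis.

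\textbf{Monotonicity.} It remains to verify that the condition $|R_v|\ge r$ continues to hold at every step. This is immediate: once a color has been witnessed twice on $N^+(v)$, further colorings of other vertices cannot undo this witness, so $|R_v|$ is monotone non-decreasing. Since $|R_v|\ge r$ is assumed for every vertex $v$ at the start, it continues to hold for every uncolored vertex throughout the process, and the greedy procedure terminates with a proper $(\Delta_o+1-r)$-coloring of $D$.

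I do not expect any real obstacle; the argument is a direct adaptation of the standard greedy-with-repeats idea (in the spirit of Molloy--Reed) to digraphs, and the digraph setting is if anything cleaner than the undirected one, because it is enough to avoid colors on \emph{out}-neighbors rather than on all neighbors. The only point that could cause trouble is the need to stay inside the fixed palette of $\Delta_o+1-r$ colors, but this is built into the greedy choice together with the inequality $|C_v|\le\Delta_o-r$.
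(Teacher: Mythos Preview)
Your proof is correct and follows exactly the approach sketched in the paper: the paper's own proof is a one-line remark that repeated colors on $N^{+}(v)$ leave enough unused colors to greedily extend the partial coloring, and your counting argument $|C_v|+|R_v|\le d^+(v)$ together with the monotonicity observation is precisely the detailed verification of that sketch. There is nothing substantively different between the two.
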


\begin{proof}
The proof is easy -- since many colors are repeated on the
out-neighborhood of $v$, there are many colors that are not used
on $N^{+}(v)$. Thus, one can greedily ``extend" the partial
coloring.
\end{proof}

\begin{proof}[Proof of Theorem \ref{th:1}]
We may assume that $c_1 \tDelta < d^{+}(v) < c_2\tDelta$
and $c_1 \tDelta < d^{-}(v) < c_2\tDelta$
for each $v \in V(D)$, where $c_1 = 1 - \frac{1}{3}e^{-11}$
and $c_2 = 1 + \frac{1}{3}e^{-11}$. If not, we
remove all the vertices $v$ not satisfying the above inequality
and obtain a coloring for the remaining graph with $\left
(1-e^{-13} \right) \tDelta$ colors. Now, if a vertex does not
satisfy the above condition either one of $d^{+}(v)$ or $d^{-}(v)$
is at most $c_1 \tDelta$ or one of $d^{+}(v)$ or $d^{-}(v)$ is at
most $\frac{1}{c_2} \tDelta$. Note that $1 - e^{-13}
> \max \{c_1, 1/c_2\}$. This ensures that there is a color
that either does not appear in the in-neighborhood or does not appear
in the out-neighborhood of $v$, allowing us to complete the coloring.

The core of the proof is probabilistic. We color the vertices
of $D$ randomly with $C$ colors, $C = \lfloor \tDelta/2 \rfloor$.
That is, for each vertex $v$ we assign $v$ a color from
$\{1,2,..., C\}$ uniformly at random. After the random coloring,
we uncolor all the vertices that are in a monochromatic directed
path of length at least 2. Clearly, this results in a proper
partial coloring of $D$ since $D$ has no digons. For each vertex
$v$, we are interested in the number of colors which are assigned
to at least two out-neighbors of $v$ and are retained by at least
two of these vertices. For analysis, it is better to define a
slightly simpler random variable. Let $v \in V(D)$. For each color
$i$, $1 \leq i \leq C$, let $O_i$ be the set of out-neighbors of
$v$ that have color $i$ assigned to them in the first phase. Let
$X_v$ be the number of colors $i$ for which $|O_i| \geq 2$ and
such that all vertices in $O_i$ retain their color after the
uncoloring process.

For every vertex $v$, we let $A_v$ be the event that $X_v$ is less
than $\frac{1}{2}e^{-11}\tDelta + 1$. We will show that with
positive probability none of the events $A_v$ occur. Then Lemma
\ref{L:1} will imply that $\chi(D) \leq (c_2-\frac{1}{2}e^{-11})
\tDelta \leq  (1 - e^{-13})\tDelta$, finishing the proof. We will
use the symmetric version of the Lov\'{a}sz Local Lemma (see for
example \cite{AS1992}). Note that the color assigned initially to
a vertex $u$ can affect $X_v$ only if $u$ and $v$ are joined by a
path of length at most 3. Thus, $A_v$ is mutually independent of
all except at most $ (2c_2\tDelta) + (2c_2\tDelta)^{2} +
(2c_2\tDelta)^3 + (2c_2\tDelta)^4 + (2c_2\tDelta)^5 +
(2c_2\tDelta)^6 \leq 100 \tDelta^{6}$ other events $A_w$.
Therefore, by the symmetric version of the Local Lemma, it
suffices to show that for each event $A_v$, $4 \cdot 100 \tDelta^6
\PP[A_v] < 1$. We will show that $\PP[A_v] < \tDelta^{-7}$. We do
this by proving the following two lemmas.

\begin{lemma} \label{L:2}
$\EE[X_v] \geq e^{-11}\tDelta - 1$.
\end{lemma}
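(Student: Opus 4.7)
The plan is to bound $\EE[X_v]$ from below by the contribution from colorings in which exactly two out-neighbors of $v$ receive a given color and both retain it. By symmetry over the $C$ colors, $\EE[X_v] \geq C \cdot \sum_{\{u,w\}} \PP[O_1 = \{u, w\} \text{ and both } u, w \text{ retain color } 1]$, where the sum ranges over unordered pairs of out-neighbors of $v$.

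Fix such a pair $\{u, w\}$. Independence of the initial color assignment gives $\PP[O_1 = \{u, w\}] = C^{-2}(1 - 1/C)^{d^+(v) - 2}$. For the conditional retention probability, the key step is to argue that it suffices that no vertex of $M := (N(u) \cup N(w)) \setminus \{u, w\}$ is initially colored $1$. I would verify this by a case analysis on the possible monochromatic directed paths of length two through $u$ or through $w$. The digon-free hypothesis is what handles the awkward sub-cases in which $u$ and $w$ are adjacent: for instance, if $w \in N^+(u)$, the only way a path $u \to w \to y$ can be monochromatic is if $y \in N^+(w)$ is colored $1$, and digon-freeness excludes the degenerate case $y = u$ (which would require both arcs $uw$ and $wu$); the other path configurations through $u$ or $w$ are handled symmetrically.

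Conditional on $O_1 = \{u, w\}$, the vertices of $M \cap (N^+(v)\setminus\{u,w\})$ are already forbidden from being colored $1$, while the colors of all vertices outside $N^+(v)$ remain independent and uniform. Hence
\[
   \PP[u, w \text{ retain} \mid O_1 = \{u, w\}] \geq (1 - 1/C)^{|M \setminus N^+(v)|} \geq (1 - 1/C)^{4 c_2 \tDelta},
\]
using $|N(u)| + |N(w)| \leq 2(d^+(u)+d^-(u))/\ldots \leq 4 c_2 \tDelta$. Combining the two factors and summing over the $\binom{d^+(v)}{2}$ pairs gives
\[
   \EE[X_v] \geq C \binom{d^+(v)}{2} C^{-2}(1 - 1/C)^{d^+(v) + 4 c_2 \tDelta - 2}.
\]

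Substituting $C = \lfloor \tDelta/2 \rfloor$ and $c_1 \tDelta \leq d^+(v) \leq c_2 \tDelta$, and using $(1 - 1/C)^m \geq \exp(-m/C - O(m/C^2))$, the right-hand side reduces to $c_1^2 \tDelta \cdot e^{-10 c_2}(1 - o(1))$. Since $c_1, c_2$ are both within $\frac{1}{3} e^{-11}$ of $1$, a direct estimate shows $c_1^2 e^{-10 c_2} \geq e^{-11}$ with room to spare (because $e^{-10}$ exceeds $e^{-11}$ by a factor of $e$), so this exceeds $e^{-11} \tDelta - 1$ for all sufficiently large $\tDelta$. The step I expect to be the main technical obstacle is not the calculation but the retention case analysis: one must carefully use the digon-free hypothesis across every sub-case of how $u$ and $w$ may or may not be adjacent to justify that the clean sufficient condition ``no vertex of $M$ colored $1$'' really does rule out every monochromatic directed path of length two through either $u$ or $w$.
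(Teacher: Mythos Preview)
Your argument is correct and follows essentially the same route as the paper: bound $X_v$ below by the count of colors hitting exactly two out-neighbors both of which retain, and lower-bound the retention probability by the event that no vertex of $N(u)\cup N(w)\cup N^+(v)$ outside $\{u,w\}$ receives that color, yielding $C\binom{c_1\tDelta}{2}C^{-2}(1-1/C)^{5c_2\tDelta}\ge e^{-11}\tDelta-1$. One small remark: the digon-free hypothesis is not actually needed in the retention case analysis you highlight, since a directed path has distinct vertices and hence any monochromatic path of length $\ge 2$ through $u$ or $w$ must already contain a third vertex in $N(u)\cup N(w)$; digon-freeness is used elsewhere (to guarantee the partial coloring is proper), not here.
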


\begin{proof}

Let $X'_v$ be the random variable denoting the number of colors
that are assigned to exactly two out-neighbors of $v$ and are
retained by both of these vertices. Clearly, $X_v \geq X'_v$ and
therefore it suffices to consider $\EE[X'_v]$.

Note that color $i$ will be counted by $X'_v$ if two vertices $u,w
\in N^{+}(v)$ are colored $i$ and no other vertex in $S = N(u)
\cup N^{+}(v) \cup N(w)$ is assigned color $i$. This will give us
a lower bound on $\EE[X'_v]$. There are $C$ choices for color $i$
and at least $\binom{c_1\tDelta}{2}$ choices for the set
$\{u,w\}$. The probability that no vertex in $S$ gets color $i$ is
at least $(1- \frac{1}{C})^{|S|} \geq  (1- \frac{1}{C})^{5c_2
\tDelta}$. Therefore, by linearity of expectation, we can
estimate:
\begin{eqnarray*}
\EE[X'_v] &\geq& C \binom{c_1\tDelta}{2} \left( \frac{1}{C}
\right)^2
\left(1- \frac{1}{C}\right)^{5c_2 \tDelta}\\
&\geq& c_1(c_1\tDelta - 1) \exp(-5c_2\tDelta / C - 1/C) \\
&\geq& \frac{\tDelta}{e^{11}}- 1
\end{eqnarray*}
for $\tDelta$ sufficiently large.
\end{proof}

\begin{lemma} \label{L:3}
$\PP \left[ | X_v - \EE[X_v] | > \log \tDelta \sqrt{\EE[X_v]} \,
\right] < \tDelta^{-7}$.
\end{lemma}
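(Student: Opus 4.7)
The plan is to apply Talagrand's concentration inequality in the form used by Molloy and Reed. $X_v$ is determined by the $O(\tDelta^3)$ independent color assignments to vertices at (undirected) distance at most $3$ from $v$. Talagrand's inequality asserts that if $X\ge 0$ is determined by such trials, has Lipschitz constant $c$ (changing one trial changes $X$ by at most $c$), and is $r$-certifiable ($X\ge s$ is witnessed by the outcomes of at most $rs$ trials), then
\[
\PP\bigl[\,|X-\EE X|>\tau+60c\sqrt{r\,\EE X}\,\bigr]\leq 4\exp\!\left(-\frac{\tau^2}{8c^2 r\,\EE X}\right).
\]
Taking $\tau=\tfrac12\log\tDelta\sqrt{\EE X_v}$, the right-hand side is $\exp(-\Omega(\log^2\tDelta))$, which is comfortably $o(\tDelta^{-7})$ for large $\tDelta$, so the inequality will be strong enough provided $c$ and $r$ are absolute constants.

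The Lipschitz bound is short. If a single vertex $z$ changes color from $i$ to $i'$, then only monochromatic directed paths of length $2$ whose common color is $i$ or $i'$ can appear or disappear, so the retention status of any vertex carrying a different color is untouched. Hence only the summands of $X_v$ indexed by $i$ and $i'$ can move, and $|\Delta X_v|\le 2$.

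Certifiability is the real obstacle: asserting $X_v\geq s$ entails claiming that certain out-neighbors of $v$ \emph{retain} their colors, i.e., that no monochromatic path of length $2$ passes through any of them, which is a negative condition and has no obvious short positive certificate. I would circumvent this by writing $X_v=Y_v-Z_v$, where $Y_v$ counts the colors appearing at least twice on $N^+(v)$ in the first phase (ignoring uncoloring) and $Z_v=Y_v-X_v$ counts those colors that are subsequently killed by the uncoloring step. Then $Y_v\geq s$ is certified by naming two out-neighbors of $v$ in each of $s$ counted colors ($r=2$); and $Z_v\geq s$ is certified by, for each of $s$ counted colors $i$, naming two out-neighbors of $v$ colored $i$ together with two further vertices completing a monochromatic directed path of length $2$ in color $i$ through one of them ($r\le 4$). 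Both $Y_v$ and $Z_v$ inherit the Lipschitz constant $2$ from the argument above.

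Since $\EE Y_v,\EE Z_v=O(\tDelta)=O(\EE X_v)$ (the lower bound $\EE X_v=\Omega(\tDelta)$ is Lemma~\ref{L:2}), applying Talagrand to $Y_v$ and $Z_v$ separately with deviation $\tfrac12\log\tDelta\sqrt{\EE X_v}$ yields failure probability at most $\tfrac12\tDelta^{-7}$ for each. Because $X_v-\EE X_v=(Y_v-\EE Y_v)-(Z_v-\EE Z_v)$, a union bound delivers $\PP[|X_v-\EE X_v|>\log\tDelta\sqrt{\EE X_v}]<\tDelta^{-7}$, as required.
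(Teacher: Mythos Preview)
Your decomposition $X_v=Y_v-Z_v$ is exactly the paper's $X_v=AT_v-Del_v$, and your certifiability/Lipschitz analysis for $Z_v$ matches the paper's ($r=4$; the paper in fact gets $c=1$, but your $c=2$ is a valid upper bound). For $Y_v$ the paper instead uses McDiarmid/Azuma (the ``Simple Concentration Bound''), which is slightly cleaner since $Y_v$ depends only on the at most $c_2\tDelta$ trials in $N^+(v)$; but Talagrand works too, since $\EE Y_v\ge\EE X_v=\Omega(\tDelta)$.

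There is one genuine gap. The Molloy--Reed form of Talagrand that you invoke carries the side condition $0\le\tau\le\EE[X]$, which you have not stated or checked for $Z_v$. For $Y_v$ this is automatic, but for $Z_v=Y_v-X_v$ there is no a~priori lower bound: $\EE Y_v$ and $\EE X_v$ are both $\Theta(\tDelta)$, and nothing you have written rules out their difference being $o(\log\tDelta\sqrt{\tDelta})$, in which case your choice $\tau=\tfrac12\log\tDelta\sqrt{\EE X_v}$ violates the hypothesis. The paper spends a full paragraph on exactly this point, using Janson's Inequality (and its extended form) to show that a vertex $u\in N^+(v)$ receiving a given color is uncolored with probability bounded away from~$0$, whence $\EE[Del_v]=\Omega(\tDelta)\gg\tau$. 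You need either that argument or an alternative version of Talagrand (e.g.\ the median-centered form) that dispenses with the side condition; as written, the application to $Z_v$ is not justified.
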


\begin{proof}
Let $AT_v$ be the random variable counting the number of colors
assigned to at least two out-neighbors of $v$, and $Del_v$ the
random variable that counts the number of colors assigned to at
least two out-neighbors of $v$ but removed from at least one of
them. Clearly, $X_v = AT_v - Del_v$ and therefore it suffices to
show that each of $AT_v$ and $Del_v$ are sufficiently concentrated
around their means. We will show that for $t = \frac{1}{2} \log
\tDelta \sqrt{\EE[X_v]}$ the following estimates hold:

\medskip

Claim 1: $\PP \left[|AT_v - \EE[AT_v]| > t \right] < 2
e^{-t^2/(8 \tDelta)}$.

\medskip

Claim 2: $\PP \left[|Del_v - \EE[Del_v]| > t \right]
  < 4 e^{-t^2/(100 \tDelta)}$.

\medskip
\noindent
The two above inequalities yield that, for $\tDelta$ sufficiently
large,
\begin{eqnarray*}
\PP[ | X_v - \EE[X_v] | > \log \tDelta \sqrt{\EE[X_v]}] &\leq& 2
e^{-\frac{t^2}{8 \tDelta}} + 4 e^{-\frac{t^2}{100 \tDelta}}\\
&\leq& \tDelta^{- \log \tDelta}\\
&<& \tDelta^{-7},
\end{eqnarray*}
as we require. So, it remains to establish both claims.

To prove Claim 1, we use a version of Azuma's inequality found in
\cite{MR2002}, called the Simple Concentration Bound.

\begin{theorem}[Simple Concentration Bound] \label{th:1.5}
Let $X$ be a random variable determined by $n$ independent trials
$T_1,..., T_n$, and satisfying the property that changing the
outcome of any single trial can affect $X$ by at most $c$. Then
$$\PP[|X-\EE[X]| > t] \leq 2e^{-\frac{t^2}{2c^2n}}. $$
\end{theorem}

Note that $AT_v$ depends only on the colors assigned to the
out-neighbors of $v$. Note that each random choice can affect $AT_v$
by at most 1. Therefore, we can take $c=1$ in
the Simple Concentration Bound for $X=AT_v$.
Since the choice of random color assignments are made
independently over the vertices and since $d^{+}(v) \leq c_2
\tDelta$, we immediately have the first claim.

For Claim 2, we use the following variant of Talagrand's
Inequality (see \cite{MR2002}).

\begin{theorem}[Talagrand's Inequality] \label{th:2}
Let $X$ be a nonnegative random variable, not equal to 0, which
is determined by $n$ independent trials, $T_1,\dots,T_n$ and
satisfyies the following conditions for some $c,r > 0$:
\begin{enumerate}
\item Changing the outcome of any single trial can affect $X$ by
at most $c$. \item For any $s$, if $X \geq s$, there are at most
$rs$ trials whose exposure certifies that $X \geq s$.
\end{enumerate}
Then for any $0 \leq \lambda \leq \EE[X]$,
$$ \PP \left[|X-\EE[X]| >  \lambda + 60c \sqrt{r\EE[X]} \, \right]
   \leq 4e^{-\frac{\lambda^2}{8c^2r\EE[X]}}.
$$
\end{theorem}

We apply Talagrand's inequality to the random variable $Del_v$.
Note that we can take $c=1$ since any single random
color assignment can affect $Del_v$ by at most 1. Now, suppose
that $Del_v \geq s$. One can certify that $Del_v \geq s$ by
exposing, for each of the $s$ colors $i$, two random color
assignments in $N^{+}(v)$ that certify that at least two vertices
got color $i$, and exposing at most two other color assignments
which show that at least one vertex colored $i$ lost its color.
Therefore, $Del_v \geq s$ can be certified by exposing $4s$ random
choices, and hence we may take $r=4$ in Talagrand's inequality.
Note that $t= \frac{1}{2} \log \tDelta \sqrt{\EE[X_v]}
>\!\!> 60c \sqrt{r\EE[Del_v]} $ since $\EE[X_v] \geq \tDelta/e^{11} - 1$ and
$\EE[Del_v] \leq c_2 \tDelta$. Now, taking $\lambda$ in
Talagrand's inequality to be $\lambda = \frac{1}{2}t$, we obtain
that $\PP[|Del_v -\EE[Del_v]| > t] \leq \PP[|Del_v-\EE[Del_v]| >
\lambda + 60c \sqrt{r\EE[X]}]$. Therefore, provided that $ \lambda
\leq \EE[Del_v]$, we have the confirmed Claim 2.

It is sufficient to show that $\EE[Del_v] = \Omega (\tDelta)$,
since $\lambda = O(\log \tDelta \sqrt{\tDelta})$. The probability
that \emph{exactly} two vertices in $N^{+}(v)$ are assigned a
particular color $c$ is at least $\frac{c_1\tDelta^2}{2} C^{-2}
(1-1/C)^{c_2\tDelta} \approx 2e^{-10}$, a constant. It remains to
show that the probability that at least one of these vertices
loses its color is also (at least) a constant. We use Janson's
Inequality (see \cite{AS1992}). Let $u$ be one of the two vertices
colored $c$. We only compute the probability that $u$ gets
uncolored. We may assume that the other vertex colored $c$ is not
a neighbor of $u$ since this will only increase the probability.
We show that with large probability there exists a monochromatic
directed path of length at least 2 starting at $u$. Let $ \Omega =
N^{+}(u) \cup N^{++}(u)$, where $N^{++}(u)$ is the second
out-neighborhood of $u$. Each vertex in $\Omega$ is colored $c$
with probability $\frac{2}{\tDelta}$. Enumerate all the directed
paths of length 2 starting at $u$ and let $P_i$ be the $i^{th}$
path. Clearly, there are at least $(c_1 \tDelta)^2$ such paths
$P_i$. Let $A_i$ be the set of vertices of $P_i$, and denote by
$B_i$ the event that all vertices in $A_i$ receive the same color.
Then, clearly $\PP[B_i] = \frac{1}{(\lfloor \tDelta/2 \rfloor)^2}
\geq \frac{4}{\tDelta^2}$. Then, $\mu = \sum \PP[B_i] \geq
\frac{4}{\tDelta^2} \cdot (c_1\tDelta)^2 = 4c_1^2$. Now, if
$\delta = \sum_{i,j : A_i \cap A_j \neq \emptyset}\PP[B_i \cap
B_j]$ in Janson's Inequality satisfies $\delta < \mu$, then
applying Janson's Inequality, with the sets $A_i$ and events
$B_i$, we obtain that the probability that none of the events
$B_i$ occur is at most $e^{-1}$, and hence the probability that
$u$ does not retain its color is at least $1-e^{-1}$, as required.
Now, assume that $\delta \geq \mu$. The following gives an upper
bound on $\delta$:
\begin{eqnarray*}
\delta &=& \sum_{i,j : A_i \cap A_j \neq \emptyset}\PP[B_i \cap B_j]
~=~ \sum_{i, j: A_i \cap A_j \neq \emptyset} \frac{1}{(\lfloor \tDelta/2 \rfloor)^3} \\
&\leq& (c_2\tDelta)^2 \cdot 2c_2\tDelta \cdot
\frac{8}{(\tDelta-2)^3} < 32,
\end{eqnarray*}
for $\tDelta \geq 100$. Now, we apply Extended Janson's Inequality
(again see \cite{AS1992}). This inequality now implies that the
probability that none of the events $B_i$ occur is at most
$e^{-c_1^2/4}$, a constant. Therefore, by linearity of expectation
$\EE[Del_v] = \Omega(\tDelta)$.
\end{proof}

Clearly, since $\EE[X_v] \leq c_2 \tDelta$, Lemmas \ref{L:2} and
\ref{L:3} imply that $\PP[A_v] < \tDelta^{-7}$. This completes the
proof of Theorem \ref{th:1}.
\end{proof}

\section{Brooks' Theorem for small $\tDelta$}

The bound in Theorem \ref{th:1} is only useful for large
$\tDelta$. Rough estimates suggest that $\tDelta$ needs to be at
least in the order of $10^{10}$. The above approach is unlikely to
improve this bound significantly with a more detailed analysis.
In this section, we improve Brooks' Theorem for all values of
$\tDelta$. We achieve this by using a result on list colorings
found in \cite{HM2010}. List coloring of digraphs is defined
analogously to list coloring of undirected graphs. A precise
definition is given below.

Let $\C$ be a finite set of colors. Given a digraph $D$, let $L:
v\mapsto L(v)\subseteq \C$ be a \DEF{list-assignment} for $D$,
which assigns to each vertex $v\in V(D)$ a set of colors. The set
$L(v)$ is called the \DEF{list} (or the set of \DEF{admissible
colors}) for $v$. We say $D$ is \DEF{$L$-colorable} if there is an
\DEF{$L$-coloring} of $D$, i.e., each vertex $v$ is assigned a
color from $L(v)$ such that every color class induces an acyclic
subdigraph in $D$. $D$ is said to be \DEF{$k$-choosable} if $D$ is
$L$-colorable for every list-assignment $L$ with $|L(v)| \geq k$
for each $v \in V(D)$. We denote by $\chi_l(D)$ the smallest
integer $k$ for which $D$ is $k$-choosable.

The result characterizes the structure of non $L$-colorable
digraphs whose list sizes are one less than under Brooks'
condition.

\begin{theorem}[\cite{HM2010}] \label{th:4}
Let $D$ be a connected digraph, and $L$ an assignment of colors to
the vertices of $D$ such that $|L(v)| \geq d^{+}(v)$ if $d^{+}(v)
= d^{-}(v)$ and $|L(v)| \geq \min \{d^{+}(v), d^{-}(v)\} + 1$
otherwise. Suppose that $D$ is not $L$-colorable. Then $D$ is
Eulerian, $|L(v)|=d^{+}(v)$ for each $v \in V(D)$, and every block
of $D$ is one of the following:
\begin{enumerate}
\item[\rm (a)] a directed cycle (possibly a digon),
\item[\rm (b)] an odd bidirected cycle, or
\item[\rm (c)] a bidirected complete digraph.
\end{enumerate}
\end{theorem}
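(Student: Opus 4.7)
The plan is to argue by contradiction: let $(D,L)$ be a counterexample minimizing $|V(D)|+|A(D)|$, so every proper subdigraph that inherits a list assignment satisfying the hypothesis is $L$-colorable. I will establish the three conclusions --- that $D$ is Eulerian, that $|L(v)|=d^+(v)$ for every $v$, and the prescribed block structure --- in sequence.

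First, a standard block-merging argument lets me assume $D$ is 2-connected: if $D$ has a cut vertex, color the blocks separately by minimality and permute colors to agree at cut vertices. Next, I claim $D$ must be Eulerian. If some $v$ has $d^+(v) \neq d^-(v)$, say $d^+(v) \leq d^-(v)$, then $|L(v)| \geq d^+(v)+1$. By minimality $D-v$ is $L$-colorable; since at most $d^+(v) < |L(v)|$ colors appear on $N^+(v)$, some $c \in L(v)$ is missing from $N^+(v)$, and setting $\varphi(v):=c$ extends the coloring, because any new monochromatic directed cycle through $v$ would have to leave $v$ along an arc to an out-neighbor colored $c$. An identical greedy argument forces $|L(v)|=d^+(v)$ at every vertex: any slack would permit such an extension at $v$.

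The main obstacle is the structural characterization of a 2-connected Eulerian digraph $D$ with $|L(v)|=d^+(v)$ everywhere that is not $L$-colorable. I plan a case split on whether $D$ contains a digon. If $D$ is digon-free, then $D$ is strongly connected (since Eulerian and connected), and if $D$ is not a single directed cycle then some vertex $v$ has two out-neighbors $u_1,u_2$ with $u_1u_2, u_2u_1 \notin A(D)$; I then identify $u_1$ and $u_2$ (or perform a similar reduction on the list assignment) to produce a proper sub-instance still satisfying the hypothesis, whose $L$-coloring --- guaranteed by minimality --- lifts to an $L$-coloring of $D$. If $D$ contains a digon, I mimic the undirected Erd\H{o}s--Rubin--Taylor argument on the bidirected subgraph spanned by the digons: locate two non-adjacent vertices with a common bidirected neighbor, assign them the same color, and apply minimality; the rigid cases in which no such pair exists are exactly odd bidirected cycles and bidirected complete digraphs, yielding (b) and (c). The pure directed-cycle case yields (a).

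The hardest step is the reduction in the digon and mixed-arc configurations. Care is needed to verify that after a vertex identification or the removal of a reducible configuration, (i) the resulting digraph still meets the degree-to-list-size hypothesis at every vertex, and (ii) any $L$-coloring of the reduced digraph lifts to an \emph{acyclic} coloring of $D$, not merely a proper one in the undirected sense, since non-digon arcs create constraints that couple across long directed paths rather than across a single edge.
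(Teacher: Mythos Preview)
The paper does not contain a proof of this theorem: it is quoted from~\cite{HM2010} and used as a black box to derive Lemma~\ref{L:4}. There is therefore no ``paper's own proof'' to compare your proposal against.

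That said, a few remarks on your outline may help. Your greedy reductions showing that $D$ must be Eulerian with $|L(v)|=d^+(v)$ everywhere are sound: deleting a vertex only lowers degrees, so the list hypothesis persists on $D-v$, and an out-neighbour-avoiding colour at $v$ certainly blocks any monochromatic directed cycle through~$v$. However, two steps in your sketch need real work rather than an appeal to the undirected template. First, the sentence ``permute colors to agree at cut vertices'' is a proper-colouring manoeuvre and is not available for list colouring; the correct reduction fixes a leaf block $B$ meeting the rest of $D$ at a cut vertex $x$, colours $D-(B-x)$ by minimality, deletes the colour of $x$ from the lists inside $B$, and then shows that $B$ (with $x$ precoloured) is still colourable unless $B$ already has one of the rigid types (a)--(c). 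Second, in your digon-free case the identification of two out-neighbours $u_1,u_2$ of $v$ is delicate for \emph{acyclic} colouring: you must argue not only that the reduced instance still satisfies the degree/list hypothesis, but also that equalising the colours of $u_1$ and $u_2$ cannot create a monochromatic directed cycle in $D$ --- this is where the choice of $u_1,u_2$ (non-adjacent, and typically chosen so that $D-\{u_1,u_2\}$ stays connected) is doing essential work. Your proposal flags these as ``care is needed''; they are the substance of the proof rather than routine verifications.
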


Now, we can state the next result of this section.
\begin{lemma} \label{L:4}
Let $D$ be a connected digraph without digons, and let $\tDelta =
\tDelta(D)$. If $\tDelta > 1$, then $\chi_l(D) \leq \lceil \tDelta
\rceil$.
\end{lemma}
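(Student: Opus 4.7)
The plan is to apply Theorem~\ref{th:4} with $k := \lceil \tilde{\Delta} \rceil$ to an arbitrary list-assignment $L$ satisfying $|L(v)| \geq k$; note that $\tilde{\Delta} > 1$ forces $k \geq 2$. First I would verify the list-size hypothesis of Theorem~\ref{th:4} at every vertex $v$. If $d^+(v) = d^-(v)$, then $d^+(v) = \sqrt{d^+(v)d^-(v)} \leq \tilde{\Delta} \leq k \leq |L(v)|$. If instead $d^+(v) \neq d^-(v)$, then either both are positive and strict AM--GM gives $\min\{d^+(v),d^-(v)\} < \sqrt{d^+(v)d^-(v)} \leq \tilde{\Delta} \leq k$, or one of them is $0$, in which case $\min\{d^+(v),d^-(v)\} = 0 < k$; since $\min\{d^+(v),d^-(v)\}$ is an integer strictly less than $k$, we obtain $\min\{d^+(v),d^-(v)\}+1 \leq k \leq |L(v)|$.

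Next, suppose for a contradiction that $D$ is not $L$-colorable. Theorem~\ref{th:4} then forces $D$ to be Eulerian with $d^+(v) = k$ at every vertex, and each block of $D$ must be a directed cycle, an odd bidirected cycle, or a bidirected complete digraph. Because $D$ has no digons, odd bidirected cycles, bidirected complete digraphs of order at least $2$, and directed $2$-cycles are all excluded, so every block is in fact a directed cycle of length at least~$3$.

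Finally I would extract a contradiction from the block tree. If $D$ itself is $2$-connected, then $D$ equals a single directed cycle and every vertex has $d^+(v) = 1$, so $k = 1$, contradicting $k \geq 2$. Otherwise the block tree of $D$ has a leaf block $B$; any vertex $v$ of $B$ other than its unique cut vertex lies in no other block, so $d^+_D(v) = d^+_B(v) = 1$, again forcing $k = 1$. Either way we reach a contradiction, so $D$ is $L$-colorable, and since $L$ was an arbitrary list-assignment with lists of size at least $k$, we conclude $\chi_l(D) \leq \lceil \tilde{\Delta} \rceil$.

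The only genuinely delicate step is the arithmetic in the first paragraph, where strict AM--GM together with the integrality of $\min\{d^+(v),d^-(v)\}$ is needed to handle the borderline case in which $\tilde{\Delta}$ happens to equal the integer $k$. Everything else is a direct invocation of Theorem~\ref{th:4} followed by a short block-decomposition argument, so the main obstacle is simply setting up the list-size bookkeeping so that Theorem~\ref{th:4} applies cleanly.
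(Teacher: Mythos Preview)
Your proof is correct and follows the same approach as the paper: apply Theorem~\ref{th:4} with lists of size $\lceil\tDelta\rceil$, verify the list-size hypothesis via the geometric-mean bound, and derive a contradiction from the block structure together with the digon-free hypothesis. You spell out two points the paper leaves implicit---the inequality $\min\{d^+(v),d^-(v)\}+1\le\lceil\tDelta\rceil$ for non-Eulerian vertices, and the block-tree argument showing some vertex has out-degree~$1$---whereas the paper simply asserts that regularity forces $D$ to be a single block; both routes yield the same contradiction.
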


\begin{proof}

We apply Theorem \ref{th:4} with all lists $L(v)$, $v \in V(D)$
having cardinality $\lceil \tDelta \rceil$. It is clear that the
conditions of Theorem \ref{th:4} are satisfied for every Eulerian
vertex $v$. It is easy to verify that the conditions are also
satisfied for non-Eulerian vertices. Now, if $D$ is not
$L$-colorable, then by Theorem \ref{th:4}, $D$ is Eulerian and
$d^{+}(v) = \lceil \tDelta \rceil$ for every vertex $v$. This
implies that $D$ is $\lceil \tDelta \rceil$-regular. Now, the
conclusion of Theorem \ref{th:4} implies that $D$ consists of a
single block of type (a), (b) or (c). This means that either $D$
is a directed cycle (and hence $\tDelta = 1$), or $D$ contains a
digon, a contradiction. This completes the proof.
\end{proof}

We can now prove the main result of this section, which improves
Brooks' bound for all digraphs without digons.

\begin{theorem} \label{th:5}
Let $D$ be a connected digraph without digons, and let $\tDelta =
\tDelta(D)$. If $\tDelta > 1$, then $\chi(D) \leq \alpha (\tDelta
+ 1) $ for some absolute constant $\alpha < 1$.
\end{theorem}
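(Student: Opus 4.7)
The plan is to prove the theorem by splitting on the size of $\tDelta$ and combining Theorem \ref{th:1} with Lemma \ref{L:4}. Let $\Delta_1$ be the absolute constant furnished by Theorem \ref{th:1}, so that every digon-free digraph with $\tDelta \geq \Delta_1$ already satisfies $\chi(D) \leq (1 - e^{-13})\tDelta \leq (1 - e^{-13})(\tDelta + 1)$. Hence for the large-$\tDelta$ regime any constant $\alpha \geq 1 - e^{-13}$ suffices.

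The remaining regime is $1 < \tDelta < \Delta_1$. Here I would invoke Lemma \ref{L:4} to obtain $\chi(D) \leq \chi_l(D) \leq \lceil \tDelta \rceil$. What must be checked is that the ratio $\lceil \tDelta \rceil / (\tDelta + 1)$ can be bounded by an absolute constant strictly less than $1$ throughout this regime. The key observation is that $\tDelta = \sqrt{d^+(v)d^-(v)}$ for integer in- and out-degrees, so $\tDelta$ lies in the discrete set $\{\sqrt{n} : n \in \NN\}$; intersecting with the bounded interval $(1, \Delta_1]$ leaves only finitely many attainable values. For each such $\tDelta$ one has $\lceil \tDelta \rceil < \tDelta + 1$ (whether $\tDelta$ is an integer or not), so the maximum
$$
\beta \;=\; \max\bigl\{\lceil \tDelta \rceil/(\tDelta + 1) \;:\; \tDelta \in (1,\Delta_1]\cap \{\sqrt{n} : n \in \NN\}\bigr\}
$$
is attained on this finite set and is strictly less than $1$. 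Setting $\alpha := \max(1 - e^{-13},\, \beta)$ then gives the desired absolute constant $\alpha < 1$, and by construction $\chi(D) \leq \alpha(\tDelta+1)$ in both regimes.

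The main obstacle, such as it is, is recognizing that Lemma \ref{L:4} alone cannot deliver a uniform $\alpha < 1$: at integer values $\tDelta = n$ the lemma gives only $\chi(D) \leq n$ and the ratio $n/(n+1) \to 1$ as $n$ grows. This is exactly why the large-$\tDelta$ portion of the argument really does require the probabilistic strengthening from Theorem \ref{th:1} rather than merely the deterministic list-coloring bound. The resulting constant $\alpha$ is essentially governed by $\Delta_1/(\Delta_1+1)$ and is therefore very close to $1$, in line with the order of magnitude of $\Delta_1$ coming from the Local Lemma computation in Section~2.
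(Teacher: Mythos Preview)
Your proof is correct and follows the same two-regime split as the paper, invoking Theorem~\ref{th:1} for $\tDelta \ge \Delta_1$ and Lemma~\ref{L:4} for $1 < \tDelta < \Delta_1$. The paper simply sets $\alpha = \max\{\Delta_1/(\Delta_1+1),\,1-e^{-13}\}$ and asserts $\lceil \tDelta \rceil \le \alpha(\tDelta+1)$ without further comment; your observation that $\tDelta$ ranges over the finite set $\{\sqrt{n}: n\in\NN\}\cap(1,\Delta_1]$ is a slightly more careful way of securing a constant strictly below~$1$ in the small-$\tDelta$ regime.
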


\begin{proof}
We define $\alpha = \max \left \{ \frac{\Delta_1}{\Delta_1 + 1},
1-e^{-13} \right \}$, where $\Delta_1$ is the constant in the
statement of Theorem \ref{th:1}. Now, if $\tDelta < \Delta_1$ then
by Lemma \ref{L:4}, it follows that $\chi(D) \leq \lceil \tDelta
\rceil \leq \alpha (\tDelta + 1)$. If $\tDelta \geq \Delta_1$,
then by Theorem \ref{th:1} we obtain that $\chi(D) \leq \left(1-
e^{-13} \right) \tDelta \leq \alpha (\tDelta + 1)$, as required.
\end{proof}

An interesting question to consider is the tightness of the bound
of Lemma \ref{L:4}. It is easy to see that the bound is tight for
$\lceil \tDelta \rceil = 2$ by considering, for example, a
directed cycle with an additional chord or a digraph consisting of
two directed triangles sharing a common vertex. The graph in
Figure 1 shows that the bound is also tight for $\lceil \tDelta
\rceil = 3$. It is easy to verify that, up to symmetry, the
coloring outlined in the figure is the unique 2-coloring. Now,
adding an additional vertex, whose three out-neighbors are the
vertices of the middle triangle and the three in-neighbors are the
remaining vertices, we obtain a 3-regular digraph where three
colors are required to complete the coloring.

Another example of a digon-free 3-regular digraph on 7 vertices
requiring three colors is the following. Take the Fano Plane and
label its points by 1,2,...,7. For every line of the Fano plane
containing points $a, b, c$, take a directed cycle through $a,b,c$
(with either orientation). There is a unique directed 3-cycle
through any two vertices because every two points line in exactly
one line. This shows that the Fano plane digraphs are not isomorphic
to the digraph from the previous paragraph.
Finally, it is easy to verify that the resulting digraph needs three
colors for coloring.

\begin{figure}[htb]
   \centering
   \includegraphics[height=5cm]{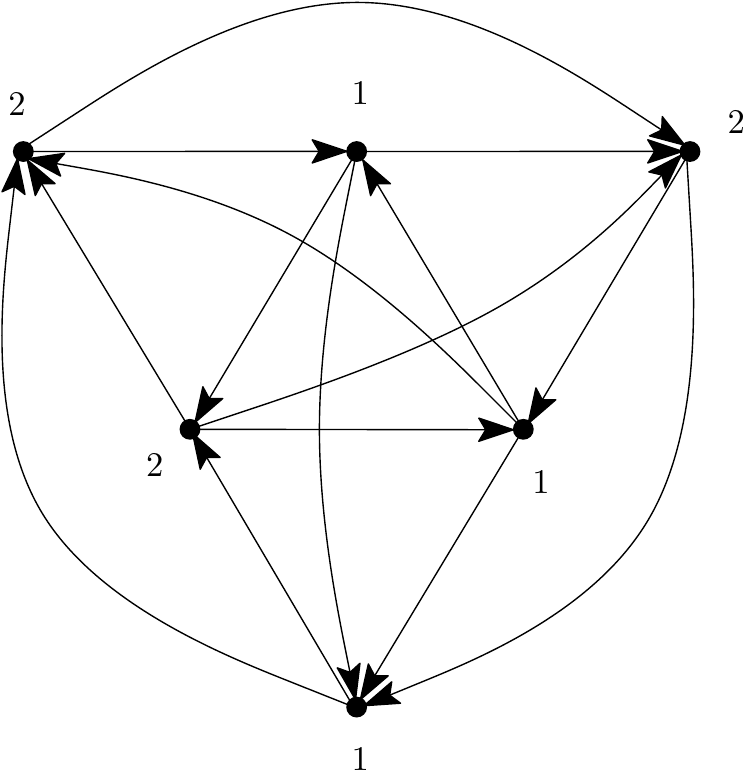}
   \caption{Constructing a $3$-regular digraph $D$ with $\chi(D) = 3$.}
   \label{fig:2}
\end{figure}
Note that the digraphs in the above examples are 3-regular
tournaments on 7 vertices. It is not hard to check that
every tournament on 9 vertices has $\lceil \tDelta \rceil =4$, and
yet is $3$-colorable. In general, we pose the following problem.

\begin{question} \label{Q:1}
What is the smallest integer $\Delta_0$ such that every digraph
$D$ without digons with $\lceil \tDelta(D) \rceil = \Delta_0$
satisfies $\chi(D) \leq \Delta_0 - 1$?
\end{question}

\bigskip
Note that this is a weak version of Conjecture \ref{conj:3}. By
Theorem \ref{th:1}, $\Delta_0$ exists. However, we believe that
$\Delta_0$ is small, possibly equal to 4. The following
proposition shows that the above holds for every $\lceil \tDelta
\rceil \geq \Delta_0$.

\begin{proposition} \label{prop:1}
Let $\Delta_0$ be defined as in Question \ref{Q:1}. Then every
digon-free digraph $D$ with $\lceil \tDelta(D) \rceil \geq
\Delta_0$ satisfies $\chi(D) \leq \lceil \tDelta(D) \rceil -1$.
\end{proposition}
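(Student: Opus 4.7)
My plan is to proceed by induction on $|V(D)|$. Suppose, for contradiction, that $D$ is a digon-free digraph with $n:=\lceil\tDelta(D)\rceil\geq\Delta_0$, $\chi(D)\geq n$, and $|V(D)|$ minimum among all such counterexamples.

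If $n=\Delta_0$, then the defining property of $\Delta_0$ from Question~\ref{Q:1} immediately gives $\chi(D)\leq\Delta_0-1$, a contradiction; hence $n\geq\Delta_0+1$. For any proper subdigraph $D'\subsetneq D$, minimality together with the inductive hypothesis applied to $D'$ (when $\lceil\tDelta(D')\rceil\geq\Delta_0$) and Lemma~\ref{L:4} (otherwise) yield $\chi(D')\leq n-1$, so $D$ is $n$-critical. Criticality then forces $\min(d^+(v),d^-(v))\geq n-1$ for every $v\in V(D)$: if not, an $(n-1)$-coloring of $D-v$ would extend to $v$, because at most $\min(d^+(v),d^-(v))\leq n-2$ of the $n-1$ available colors can appear in both $N^+(v)$ and $N^-(v)$. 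Combining this with $d^+(v)d^-(v)\leq n^2$ pins each degree pair into the short list $\{(n-1,n-1),(n-1,n),(n,n-1),(n,n),(n-1,n+1),(n+1,n-1)\}$.

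The key step is to apply Theorem~\ref{th:4} to $D$ with the constant list assignment $L(v)=\{1,\ldots,n-1\}$. If the list-size hypothesis holds at every vertex -- that is, every Eulerian vertex has $d^+(v)\leq n-1$ and every non-Eulerian vertex has $\min(d^+(v),d^-(v))\leq n-2$ -- then either $D$ admits an $L$-coloring, yielding the desired $(n-1)$-coloring of $D$ and contradicting $\chi(D)\geq n$, or every block of $D$ is of one of the three types described by Theorem~\ref{th:4}. Since $D$ is digon-free, only the directed-cycle case survives; such a digraph is easily seen to be $2$-colorable (process the block-cut tree from the leaves, 2-coloring each directed-cycle block so that its unique parent cut vertex receives the already-chosen color), and since $n\geq\Delta_0+1\geq 5$ this again contradicts $\chi(D)\geq n$.

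The main obstacle is the remaining case, in which the hypothesis of Theorem~\ref{th:4} fails at a \emph{problematic} vertex $v^*$: either an Eulerian vertex with $d^+(v^*)=d^-(v^*)=n$, or a non-Eulerian vertex with $\min(d^+(v^*),d^-(v^*))=n-1$. I expect to handle this case by combining Theorem~\ref{th:0} -- which rules out a $(n-1)$-regular Eulerian digon-free $n$-critical digraph once $n\geq 4$ -- with a Brooks-type recoloring around $v^*$: either a Kempe-chain exchange on two colors that are not simultaneously forced to appear in $N^+(v^*)\cap N^-(v^*)$ by every $(n-1)$-coloring of $D-v^*$, or the identification of two non-adjacent out-neighbors of $v^*$ (whose existence is supplied by the tight degree constraints of the previous step) so as to free a color in $N^+(v^*)$ and permit the extension to $v^*$.
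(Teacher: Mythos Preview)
Your proposal has a genuine gap: the case split you set up is ineffective, because the ``good'' branch (where the hypothesis of Theorem~\ref{th:4} holds at every vertex for the constant list of size $n-1$) is vacuous. Indeed, you have already established that $\min(d^+(v),d^-(v))\geq n-1$ for every $v$, so no non-Eulerian vertex can satisfy $\min(d^+(v),d^-(v))\leq n-2$; and if every vertex were Eulerian with $d^+(v)=d^-(v)=n-1$, then $\tDelta(D)=n-1$, contradicting $\lceil\tDelta(D)\rceil=n$. Thus \emph{some} vertex is ``problematic,'' and the entire content of the argument lies in the branch you only sketch with ``I expect to handle this case.'' The suggested tools (Theorem~\ref{th:0}, Kempe exchanges, identification of out-neighbors) are not developed into an argument, and it is not at all clear how to make them work here: Theorem~\ref{th:0} concerns $(n-1)$-regular digraphs, which is precisely the case you have just excluded, and the recoloring tricks would need to cope with possibly many problematic vertices simultaneously, not a single $v^*$.

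The paper's proof avoids all of this with a short induction on $\lceil\tDelta\rceil$ rather than on $|V(D)|$. Take a maximal acyclic set $U\subseteq V(D)$. By maximality, every $v\in V(D)\setminus U$ lies on a directed cycle in $D[U\cup\{v\}]$, hence has both an in-neighbor and an out-neighbor in $U$; so $d^\pm_{D-U}(v)\leq d^\pm_D(v)-1$, and by AM--GM one gets $\sqrt{d^+_{D-U}(v)\,d^-_{D-U}(v)}\leq\sqrt{d^+_D(v)\,d^-_D(v)}-1$, whence $\lceil\tDelta(D-U)\rceil\leq\lceil\tDelta(D)\rceil-1$. Color $U$ with one new color and finish by induction. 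This bypasses Theorem~\ref{th:4}, criticality, and any recoloring entirely.
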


\begin{proof}
The proof is by induction on $\lceil \tDelta \rceil$. If $\lceil
\tDelta \rceil = \Delta_0$ this holds by the definition of
$\Delta_0$. Otherwise, let $U$ be a maximal acyclic subset of $D$.
Then $\lceil \tDelta(D-U) \rceil \leq \lceil \tDelta(D) \rceil - 1$
for otherwise $U$ is not maximal. Since we can color $U$ by a
single color, we can apply the induction hypothesis to complete
the proof.
\end{proof}

As a corollary we get:

\begin{corollary}
There exists a positive constant $\alpha < 1$ such that for every
digon-free digraph $D$ with $\lceil \tDelta(D) \rceil \geq
\Delta_0$, $\chi(D) \leq \alpha \lceil \tDelta \rceil$.
\end{corollary}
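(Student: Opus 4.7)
The plan is to simply turn the additive savings from Proposition \ref{prop:1} into a multiplicative factor. By that proposition, every digon-free digraph $D$ with $m := \lceil \tDelta(D) \rceil \geq \Delta_0$ satisfies $\chi(D) \leq m - 1$. So I would set
\[
\alpha = \frac{\Delta_0 - 1}{\Delta_0} = 1 - \frac{1}{\Delta_0},
\]
which is a fixed constant strictly less than $1$ because $\Delta_0$ is a fixed positive integer (whose existence is guaranteed by Theorem \ref{th:1} together with Proposition \ref{prop:1}).

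The verification is a one-line calculation: for any digon-free digraph $D$ with $m \geq \Delta_0$,
\[
\chi(D) \leq m - 1 = m\Bigl(1 - \tfrac{1}{m}\Bigr) \leq m\Bigl(1 - \tfrac{1}{\Delta_0}\Bigr) = \alpha\,\lceil \tDelta(D) \rceil,
\]
where the middle inequality uses the hypothesis $m \geq \Delta_0$ together with monotonicity of $x \mapsto 1 - 1/x$.

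There is no real obstacle here; the only point worth articulating explicitly is that $\Delta_0$ is a fixed integer so that the chosen $\alpha$ is a \emph{universal} constant below $1$ that does not depend on $D$. Once this is observed, the corollary is an immediate consequence of Proposition \ref{prop:1}.
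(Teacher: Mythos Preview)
Your argument contains a sign error that is not fixable within the chosen approach. In the displayed chain
\[
m - 1 = m\Bigl(1 - \tfrac{1}{m}\Bigr) \leq m\Bigl(1 - \tfrac{1}{\Delta_0}\Bigr)
\]
you invoke monotonicity of $x \mapsto 1 - 1/x$. That function is \emph{increasing}, so from $m \geq \Delta_0$ you obtain $1 - 1/m \geq 1 - 1/\Delta_0$, and hence $m-1 \geq m(1 - 1/\Delta_0)$, the reverse of what you wrote. Concretely, with $\Delta_0 = 4$ you would be claiming $\alpha = 3/4$, but already for $m = 100$ Proposition~\ref{prop:1} only gives $\chi(D) \leq 99$, which is not $\leq 75$.

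The deeper issue is that the additive saving from Proposition~\ref{prop:1} alone cannot yield a multiplicative constant bounded away from $1$: the bound $\chi(D) \leq m-1$ gives only $\chi(D)/m \leq 1 - 1/m$, and $\sup_{m \geq \Delta_0}(1 - 1/m) = 1$. To obtain a universal $\alpha < 1$ you must bring in Theorem~\ref{th:1}, which supplies the genuine multiplicative factor $1 - e^{-13}$ for all $\tDelta \geq \Delta_1$. The paper does exactly this: it sets $\alpha = \max\bigl\{\frac{\lceil \Delta_1 \rceil}{\lceil \Delta_1 \rceil + 1},\, 1 - e^{-13}\bigr\}$ and splits into two ranges, using Theorem~\ref{th:1} when $\tDelta \geq \Delta_1$ and using Proposition~\ref{prop:1} only on the \emph{bounded} range $\Delta_0 \leq \lceil \tDelta \rceil \leq \lceil \Delta_1 \rceil$, where the ratio $(m-1)/m$ stays below $\lceil \Delta_1 \rceil / (\lceil \Delta_1 \rceil + 1)$.
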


\begin{proof}
Let $\alpha = \max \left \{ \frac{\lceil \Delta_1 \rceil}{\lceil
\Delta_1 \rceil + 1}, 1-e^{-13} \right \}$, where $\Delta_1$ is
the constant in the statement of Theorem \ref{th:1}. Now, applying
Theorem \ref{th:1} or Proposition \ref{prop:1} gives the result.
\end{proof}


\begin{thebibliography}{99}

\bibitem{AS1992} N. Alon, J. Spencer, The Probabilistic Method,
Wiley, 1992.

\bibitem{BG2001}
J. Bang-Jensen, G. Gutin, Digraphs. Theory, Algorithms and
Applications, Springer, 2001.

\bibitem{BFJKM2004}
D. Bokal, G. Fijav\v{z}, M. Juvan, P. M. Kayll, B. Mohar, The
circular chromatic number of a digraph, J. Graph Theory 46 (2004)
227--240.

\bibitem{EGK1991}
P. Erd\H{o}s, J. Gimbel, D. Kratsch, Some extremal results in
cochromatic and dichromatic theory, J. Graph Theory 15 (1991)
579--585.

\bibitem{HM2010}
A. Harutyunyan, B. Mohar, Gallai's theorem for digraphs, preprint.

\bibitem{J1996}
A. Johansson, Asymptotic choice number for triangle free graphs,
DIMACS Technical Report (1996) 91--95.

\bibitem{MM2002}
C. McDiarmid, B. Mohar, private communication, 2002.

\bibitem{M2003}
B. Mohar, Circular colorings of edge-weighted graphs, Journal of
Graph Theory 43 (2003) 107--116.

\bibitem{M2010}
B. Mohar, Eigenvalues and colorings of digraphs, Linear Algebra
and its Applications 432 (2010) 2273--2277.

\bibitem{MR2002} M. Molloy, B. Reed, Graph Colouring and the
Probabilistic Method, Springer, 2002.

\bibitem{N1982}
V. Neumann-Lara, The dichromatic number of a digraph, J.~Combin.\
Theory, Ser. B 33 (1982) 265--270.

\bibitem{R1998}
B. Reed, $\omega, \Delta,$ and $\chi$,
J. Graph Theory 27 (1998) 177--212.

\end{thebibliography}
\end{document}